\newcommand{\IB}{{\mathbb B}}
\newcommand{\IC}{{\mathbb C}}
\newcommand{\IM}{{\mathbb M}}
\newcommand{\IZ}{{\mathbb Z}}
\newcommand{\IR}{{\mathbb R}}
\newcommand{\cC}{{\mathcal C}}
\newcommand{\cZ}{{\mathcal Z}}
\newcommand{\acts}{\curvearrowright}
\newcommand{\ve}{\varepsilon}
\newcommand{\Ga}{\Gamma}
\newcommand{\La}{\Lambda}
\newcommand{\id}{\mathrm{id}}
\newcommand{\SL}{\mathrm{SL}}
\DeclareMathOperator{\Prob}{Prob}
\newcommand{\ip}[1]{\mathopen{\langle}#1\mathclose{\rangle}}
\newtheorem{thm}{Theorem}
\newtheorem{cor}[thm]{Corollary}
\newtheorem{lem}[thm]{Lemma}
\newtheorem*{claim}{Claim}
\theoremstyle{definition}
\newtheorem{defn}[thm]{Definition}
\newtheorem{exa}[thm]{Example}
\newtheorem{rem}[thm]{Remark}
\title[Fullness of some von Neumann algebras]{A remark on fullness of some group measure space von Neumann algebras}
\author{Narutaka Ozawa}
\email{narutaka@kurims.kyoto-u.ac.jp}
\address{RIMS, Kyoto University, \mbox{606-8502} Kyoto, Japan}
\thanks{The author was partially supported by JSPS26400114}
\subjclass{46L36; 46L10, 43A07}
\keywords{Full factors, strongly ergodic actions}
\date{\today}
\begin{document}
\begin{abstract}
Recently C. Houdayer and Y. Isono have proved among other things 
that every biexact group $\Gamma$ has the property that for any 
non-singular strongly ergodic essentially free action 
$\Gamma \curvearrowright (X,\mu)$ 
on a standard measure space, the group measure space von Neumann 
algebra $\Gamma \ltimes L^\infty(X)$ is full.
In this note, we prove the same property for a wider class of groups, 
notably including $\mathrm{SL}(3,{\mathbb Z})$. 
We also prove that for any connected simple Lie group $G$ with finite center, 
any lattice $\Gamma \le G$, and any closed non-amenable subgroup $H \le G$, 
the non-singular action $\Gamma \curvearrowright G/H$ is strongly ergodic and 
the von Neumann factor $\Gamma \ltimes L^\infty(G/H)$ is full. 
\end{abstract}
\maketitle
\section{Introduction}
Recall that a von Neumann factor $N$ is said to be \emph{full} (\cite{connes}) 
if the \emph{central sequence algebra} $N'\cap N^\omega$ is trivial for a 
non-principal ultrafilter $\omega$.
For more information on fullness, we refer the reader to \cite{ah,hi} and the references therein. 
Recently, C. Houdayer and Y. Isono (\cite{hi}) have studied which group has 
the property that the group measure space factor $\Gamma\ltimes L^\infty(X)$ 
is full for every non-singular strongly ergodic essentially free action $\Ga\acts(X,\mu)$ 
on a standard measure space, and they have proved among other things 
that \emph{biexact} groups (e.g., hyperbolic groups) have this property. 
Recall that a non-singular action $\Ga\acts(X,\mu)$ on a probability space 
(in case $\mu$ is not a probability measure, replace it with a probability 
measure in the same measure class) is said to be \emph{strongly ergodic} 
if any sequence $(E_n)_n$ of measurable subsets such that 
$\mu(E_n\bigtriangleup sE_n)\to0$ for every $s\in\Ga$ 
has to be trivial: $\mu(E_n)(1-\mu(E_n))\to0$. 
Unless the action is strongly ergodic, the von Neumann algebra
$\Gamma\ltimes L^\infty(X,)$ cannot be full. 
We note that in case the strongly ergodic action $\Ga\acts(X,\mu)$ 
is \emph{probability measure preserving}, M. Choda (\cite{choda}) has 
already obtained in 1982 a rather satisfactory result 
that the factor $\Gamma\ltimes L^\infty(X)$ 
is full whenever $\Ga$ is non \emph{inner amenable}. 
In this note, we combine Choda's proof with R. J. Zimmer's notion of amenable 
action (\cite{zimmer:invent}) 
and prove Houdayer and Isono's property for a wider class of groups, notably 
including $\SL(3,\IZ)$, which is not biexact (\cite{sako}).
We also prove that for any connected simple Lie group $G$ with finite center, 
any lattice $\Ga\le G$, and any closed non-amenable subgroup $H\le G$, 
the non-singular action $\Ga\acts G/H$ is strongly 
ergodic and the von Neumann algebra $\Ga\ltimes L^\infty(G/H)$ is full 
as long as it is a factor. 

\subsection*{Acknowledgements}
The author thanks Professor R.~Tomatsu for bringing his attention 
to the motivating problem about fullness of $\SL(n,\IZ)\ltimes L^\infty(\IR^n)$ 
and for providing him the first part of the proof of Corollary~\ref{cor}. 
He also thanks Professor P.~de la Harpe and Professor N.~Monod 
for informing him several years ago about Example~\ref{exa:sl3}, 
and thanks Professor A.~Ioana and Professor J.~Peterson for 
communicating to the author Remark~\ref{rem:ip}.
\section{Groups with Houdayer and Isono's property}
We recall the notion of amenability in the sense of Zimmer and generalize it to a relative situation. 
The definition below is different from the original, but is equivalent to 
it (\cite[Proposition 4.1]{zimmer:invent}). 
We also note that amenability of a non-singular action 
$\Ga\acts(X,\mu)$ is equivalent to injectivity of the von Neumann algebra $\Ga\ltimes L^\infty(X)$ 
(\cite{zimmer:invent,delaroche}).

\begin{defn}\label{defn}
Let $\Ga$ be a discrete group and $\cC$ be a non-empty family of its subgroups, and 
consider the set $K=\bigsqcup_{\Lambda\in\cC} \Ga/\Lambda$ on which $\Ga$ 
acts by translation. 
A non-singular action $\Ga\acts(X,\mu)$ of $\Ga$ 
on a standard measure space $(X,\mu)$ 
is said to be \emph{amenable} (in the sense of Zimmer) 
if there is a $\Ga$-equivariant conditional expectation $\Phi$ from 
$L^\infty(\Ga\times X)$ onto $L^\infty(X)$, where $\Ga$ acts on $\Ga\times X$ diagonally. 
(Recall that $\Phi$ is called a 
\emph{conditional expectation} if it is positive and satisfies $\Phi(1\otimes f)=f$ for every $f\in L^\infty(X)$.)
We say $\Ga\acts(X,\mu)$ is \emph{amenable relative to $\cC$} 
if there is a $\Ga$-equivariant conditional expectation from 
$L^\infty(K\times X)$ onto $L^\infty(X)$, where $\Ga$ acts on $K\times X$ diagonally. 
\end{defn}

\begin{rem}
We note that amenable actions on non-atomic measure spaces are never strongly ergodic 
by the Connes--Feldman--Weiss theorem (\cite{cfw}, see also \cite[Theorem 2.4]{schmidt}).
We collect here some simple observations. 
Amenability does not change when one replaces 
the measure $\mu$ with another in the same measure class. 
By definition, amenability is same as amenability relative to $\{\mathbf{1}\}$, where $\mathbf{1}$ 
is the trivial subgroup consisting of the neutral element $e$. 
Let $\Ga\acts(X,\mu)$ and $\cC$ be given. 
If $\Ga\acts(X,\mu)$ is amenable, then it is amenable relative to $\cC$. 
The converse also holds true if the family $\cC$ consists entirely of amenable subgroups. 
If $\ell_\infty(\bigsqcup_{\Lambda\in\cC} \Ga/\Lambda)$ admits a $\Ga$-invariant state, 
then the action $\Ga\acts(X,\mu)$ is amenable relative to $\cC$. 
The converse also holds true if $L^\infty(X)$ admits a $\Ga$-invariant state 
(e.g., if the action is probability measure preserving). 
Let $K$ be a set on which $\Ga$ acts. 
Then, there is a $\Ga$-equivariant conditional 
expectation from $L^\infty(K\times X)$ onto $L^\infty(X)$ 
if and only if 
$\Ga\acts(X,\mu)$ is amenable relative to the family $\{ \Ga_p : p\in K\}$ 
of the stabilizer subgroups $\Ga_p=\{ s\in\Ga : sp=p\}$. 
We will be interested in the conjugation action of $\Ga$ on $\Ga$. 
The stabilizer subgroup of the conjugation action $\Ga\acts\Ga$ at $t\in\Ga$ 
is the centralizer subgroup $C_{\Ga}(t):=\{ s\in\Ga : st=ts \}$. 
\end{rem}

\begin{thm}\label{thm}
Let $\Ga$ be a countable group, 
$\Omega\subset\Ga$ be a conjugacy invariant subset, 
and $\cC_\Omega=\{ C_\Ga(t) : t\in\Omega\}$. 
Let $\Ga\acts(X,\mu)$ be a non-singular action of 
$\Ga$ on a standard measure space, which does not have 
a non-null $\cC_\Omega$-relatively amenable component. 
Let $(w_n)_n$ be a bounded sequence in $\Ga\ltimes L^\infty(X)$ such that 
$w_n-\lambda(s)w_n\lambda(s)^*\to0$ ultrastrongly for every $s\in\Ga$, and 
expand them as $w_n=\sum_t \lambda(t)w_n^t$.
Then, one has $\sum_{t\in\Omega} |w_n^t|^2\to0$ ultrastrongly in $L^\infty(X)$.
\end{thm}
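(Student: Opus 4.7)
The plan is to argue by contradiction, extending Choda's averaging argument to the relative setting. Suppose $\sum_{t\in\Omega}|w_n^t|^2$ does not tend to $0$ ultrastrongly; after passing to a subsequence, fix a normal state $\phi_0$ on $L^\infty(X)$ and $\delta>0$ with $\phi_0(\sum_{t\in\Omega}|w_n^t|^2)\ge\delta$ for every $n$. Since $\Omega$ is conjugation-invariant, the $\Ga$-set $\bigsqcup_{\Lambda\in\cC_\Omega}\Ga/\Lambda$ is identified with $\Omega$ under the conjugation action. By the last observation of the preceding Remark, it therefore suffices to produce a non-null $\Ga$-invariant measurable subset $Y\subset X$ together with a $\Ga$-equivariant conditional expectation $L^\infty(\Omega\times Y)\to L^\infty(Y)$; this contradicts the hypothesis on $\Ga\acts(X,\mu)$.

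For each $n$, define a positive linear map $\Phi_n\colon L^\infty(\Omega\times X)\to L^\infty(X)$ by
\[
\Phi_n(f)=\sum_{t\in\Omega} f(t,\cdot)\,|w_n^t|^2,
\]
the series converging ultrastrongly and $\|\Phi_n\|$ being bounded by $\|w_n\|^2$ uniformly in $n$. The key step is asymptotic $\Ga$-equivariance. Writing $\sigma_s$ for the action automorphism on $L^\infty(X)$ and $\rho_s$ for the induced diagonal action on $L^\infty(\Omega\times X)$, the Fourier expansion $\lambda(s)w_n\lambda(s)^*=\sum_u\lambda(u)\sigma_s(w_n^{s^{-1}us})$ combined with the hypothesis $w_n-\lambda(s)w_n\lambda(s)^*\to0$ ultrastrongly yields $\sum_u|w_n^u-\sigma_s(w_n^{s^{-1}us})|^2\to0$ ultrastrongly in $L^\infty(X)$. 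Applying the pointwise identity $|a|^2-|c|^2=\overline{(a-c)}\,a+\overline{c}\,(a-c)$ and Cauchy--Schwarz in the commutative algebra $L^\infty(X)$ then gives $\Phi_n(\rho_s(f))-\sigma_s(\Phi_n(f))\to0$ ultrastrongly for every $s\in\Ga$ and $f\in L^\infty(\Omega\times X)$.

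Banach--Alaoglu lets me extract a point-ultraweak limit $\Phi=\Lim_{n\to\omega}\Phi_n$ along a non-principal ultrafilter $\omega$; this $\Phi$ is positive, genuinely $\Ga$-equivariant, and satisfies $\Phi(1\otimes h)=h\cdot a$ for $h\in L^\infty(X)$, where $a:=\Lim_\omega\sum_{t\in\Omega}|w_n^t|^2$ is a non-zero positive element of $L^\infty(X)$ with $\phi_0(a)\ge\delta$. An analogous estimate, together with the conjugation-invariance of $\Omega$, shows $\sigma_s(a)=a$ for every $s\in\Ga$. I then choose $\ve>0$ such that $Y:=\{a\ge\ve\}$ has positive measure; $Y$ is $\Ga$-invariant, $a^{-1}|_Y$ is bounded, and the map
\[
\tilde\Phi(g):=a^{-1}|_Y\cdot\Phi(\tilde g)\big|_Y,\qquad g\in L^\infty(\Omega\times Y),
\]
(with $\tilde g$ the extension of $g$ by zero to $\Omega\times X$) is a $\Ga$-equivariant conditional expectation from $L^\infty(\Omega\times Y)$ onto $L^\infty(Y)$, using equivariance of $\Phi$ and the $\Ga$-invariance of $a$ and of $Y$.

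The main technical point is the asymptotic $\Ga$-equivariance estimate for $\Phi_n$; once this is in hand, the only subtle step is the rescaling by $a^{-1}$, which forces a restriction to the $\Ga$-invariant sublevel set $\{a\ge\ve\}$ in order to convert the limit positive map $\Phi$ (for which $\Phi(1)=a\neq 1$) into an honest conditional expectation on a non-null invariant piece.
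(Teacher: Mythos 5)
Your argument is correct and is essentially the paper's proof: the same weighting of $L^\infty(\Omega\times X)$ by $|w_n^t|^2$, the same Cauchy--Schwarz $\ell^1$-estimate giving asymptotic equivariance, and the same conclusion via restriction to a level set of the $\Ga$-invariant limit function to contradict the absence of a $\cC_\Omega$-relatively amenable component. The only (harmless, arguably cleaner) deviation is that you take the pointwise ultraweak limit of the \emph{unnormalized} positive maps and divide by $a^{-1}$ on $\{a\ge\ve\}$ afterwards, whereas the paper normalizes each $\Phi_n$ by $1/h_n$ first, which is why it must pass to convex combinations to get $h_n\to h$ in measure before restricting to $\{1\le h\le 2\}$ --- a step your ordering lets you skip.
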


\begin{proof}
We put $s\cdot f:=\lambda(s)f\lambda(s)^*$ for $f\in L^\infty(X)$ and $s\in\Ga$, 
or equivalently $(s\cdot f)(x)=f(s^{-1}x)$. 
We may assume that $\mu$ is a probability measure. 
Put $h_n^t := |w_n^t|^2$ and $C:=\sup_n\| w_n\|^2$. 
Then $h_n^t$'s are non-negative functions such that 
$0\le \sum_t h_n^t \le C$ and 
\begin{align*}
\sum_{t\in\Ga} & \| h_n^t - s\cdot h_n^{s^{-1}ts} \|_{L^1(X)} \\
 &\le (\sum_{t\in\Ga} \| w_n^t - s\cdot w_n^{s^{-1}ts} \|_{L^2(X)} ^2)^{1/2} 
        (\sum_{t\in\Ga} \| |w_n^t| + |s\cdot w_n^{s^{-1}ts}| \|_{L^2(X)} ^2)^{1/2} \\
 &\le \| (w_n-\lambda(s)w_n\lambda(s)^*)(\delta_e\otimes 1_X)\|_{\ell_2(\Ga\times X)}
        \cdot 2\|w_n\| \\
 &\to 0
\end{align*}
for every $s\in\Ga$. It suffices to prove that if there are functions $h_n^t$ as above 
such that $h_n:=\sum_{t\in\Omega} h_n^t$ does not converge to zero, then 
a non-null $\cC_\Omega$-relatively amenable component exists. 
By compactness, we may assume that ultraweak limit 
$h:=\lim_n h_n$ exists and after scaling that $X_0:=\{ x \in X : 1\le h(x)\le 2\}$ 
has positive measure. 
By passing to convex combinations, we may further assume 
that $h_n\to h$ ultrastrongly. 
Since $\Omega$ is conjugacy invariant, the function $h$ is $\Ga$-invariant and so is $X_0$.
We consider the conditional expectations 
$\Phi_n$ from $L^\infty(\Omega\times X_0)$ onto $L^\infty(X_0)$ given by 
\[
\Phi_n(f)(x)=\frac{1}{h_n(x)}\sum_{t\in\Omega}h_n^t(x)f(t,x),
\]
and claim that it is approximately $\Ga$-equivariant. 
(Let us not care the points $x$ where $h_n(x)=0$.)
Let $s\in\Ga$ and $f\in L^\infty(\Omega\times X_0)$ with $\|f\|_\infty\le1$ be given. 
Then one has 
\[
\ve_n:=\mu(\{ x\in X_0 : |h_n(x)-h(x)| + |h_n(s^{-1}x) - h(x)| \geq 1/n \})\to 0
\]
(recall that $h$ is $\Ga$-invariant) and  
\begin{align*}
&\| \Phi_n(s\cdot f) - s\cdot\Phi_n(f)\|_{L^1(X_0)}\\
 &\le \int_{X_0}\frac{1}{h(x)}\Bigl|\sum_{t\in\Omega} h_n^t(x) f(s^{-1}ts,s^{-1}x) 
  - \sum_{t\in\Omega} h_n^t(s^{-1}x) f(t,s^{-1}x) \Bigr|\,d\mu(x) + 2\ve_n+\frac{2}{n}\\
 &\le \int_{X_0}\sum_{t\in\Omega}\bigl| h_n^{t}(x) - (s\cdot h_n^{s^{-1}ts})(x) \bigr| 
  |f(s^{-1}ts,s^{-1}x)|\,d\mu(x)  + 2\ve_n+\frac{2}{n}\\
 &\to 0.
\end{align*}
This proves the claim. 
Hence any pointwise ultraweak limit of $(\Phi_n)_n$ 
is a $\Ga$-equivariant conditional expectation and the action 
$\Ga\acts(X_0,\mu|_{X_0})$ is amenable relative to $\cC_\Omega$.
\end{proof}

The following Corollary gives a criterion of $\Ga$ for which every 
non-singular strongly ergodic essentially free action $\Ga\acts(X,\mu)$ 
gives rise to a full factor $\Ga\ltimes L^\infty(X)$. The essential freeness 
assumption is only used to assure $\Ga\ltimes L^\infty(X)$ (and perhaps 
$\La\ltimes L^\infty(X_0)$ in the proof) is a factor and probably can be 
greatly relaxed. 

\begin{cor}\label{cor}
Assume that $\Ga$ is a countable group which has a finite index 
subgroup $\La$ such that $\La_{\mathrm{nac}}:= \{ t\in\La : C_\La(t)\mbox{ is not amenable}\}$
is finite. 
Then, for any non-singular strongly ergodic essentially free action  
$\Ga\acts(X,\mu)$ on a standard measure space, 
the von Neumann factor $\Ga\ltimes L^\infty(X)$ is full.
\end{cor}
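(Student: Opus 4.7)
The plan is to apply Theorem~\ref{thm} to the subgroup $\La$ after reducing to the case where $\La$ is normal in $\Ga$ and acts ergodically on $X$, and then to transport fullness of $N:=\La\ltimes L^\infty(X)$ back to $M:=\Ga\ltimes L^\infty(X)$ through the finite crossed-product structure.

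The preparatory reductions are as follows. Replace $\La$ by its normal core in $\Ga$; this is still of finite index, and since $C_{\La'}(t)=C_\La(t)\cap\La'$ has finite index in $C_\La(t)$ for any finite-index subgroup $\La'\le\La$, the nac-set $\La'_{\mathrm{nac}}=\La'\cap\La_{\mathrm{nac}}$ remains finite. Next, pick a $\La$-ergodic component $X_1\subset X$ and set $\Ga_1:=\{g\in\Ga:gX_1=X_1\}$, a finite-index subgroup of $\Ga$ containing $\La$. A straightforward averaging argument---extending a $\Ga_1$-almost-invariant subset of $X_1$ to a $\Ga$-almost-invariant subset of $X$ via $\Ga/\Ga_1$---passes strong ergodicity from $\Ga\acts X$ to $\Ga_1\acts X_1$. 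Since $\Ga\acts X$ is ergodic, the projection $p_1:=1_{X_1}$ has full central support in $M$, so $M$ is Morita equivalent to the corner $p_1Mp_1=\Ga_1\ltimes L^\infty(X_1)$, and fullness of $M$ reduces to fullness of this corner. After renaming, we may assume $\La\triangleleft\Ga$ with $\La\acts(X,\mu)$ ergodic.

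The main step is to show that $N$ is a full factor. Let $(v_n)$ be a bounded central sequence in $N$, expanded as $v_n=\sum_{t\in\La}\lambda(t)v_n^t$. Apply Theorem~\ref{thm} with $\Ga$ there replaced by $\La$ and with $\Omega:=\La\setminus\La_{\mathrm{nac}}$, which is conjugacy invariant in $\La$. For each $t\in\Omega$ the centralizer $C_\La(t)$ is amenable by hypothesis, so by the Remark $\cC_\Omega$-relative amenability of the $\La$-action coincides with ordinary amenability; strong ergodicity on the non-atomic space $X$ rules this out via Connes--Feldman--Weiss. Theorem~\ref{thm} then yields $\sum_{t\in\Omega}|v_n^t|^2\to0$ ultrastrongly. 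For each of the finitely many remaining $t\in\La_{\mathrm{nac}}\setminus\{e\}$, essential freeness furnishes a measurable three-coloring $X=A_1\sqcup A_2\sqcup A_3$ with $tA_i\cap A_i=\emptyset$; commutation of $v_n$ with each $1_{A_i}\in L^\infty(X)$ then forces $\|v_n^t\|_2\to0$. Finally $v_n^e=E_{L^\infty(X)}(v_n)$ is $\La$-almost-invariant, so strong ergodicity of $\La\acts X$ makes it asymptotically scalar, showing that $N$ is full.

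It remains to return to $M$. Since $\La\triangleleft\Ga$, one has $M=N\rtimes(\Ga/\La)$, with the finite group $\Ga/\La$ acting outerly on $N$ by essential freeness of $\Ga\acts X$. The standard fact that outer crossed products of full factors by finite groups remain full then completes the proof. The subtlest point in the argument is the verification of the no-relatively-amenable-component hypothesis in Theorem~\ref{thm}; this is precisely what the finiteness of $\La_{\mathrm{nac}}$, combined with the Remark, makes automatic, reducing $\cC_\Omega$-relative amenability to ordinary amenability and then appealing to strong ergodicity.
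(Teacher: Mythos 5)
Your overall strategy is close to the paper's (reduce to $\La$, apply Theorem~\ref{thm} with $\Omega=\La\setminus\La_{\mathrm{nac}}$, dispose of the finite leftover set), but there is one genuine gap in the final step. To conclude that $v_n^e$ is asymptotically scalar you invoke \emph{strong ergodicity of $\La\acts X_1$}, which you never establish. Your averaging argument only yields strong ergodicity of $\Ga_1\acts X_1$: there the translates $gE_n$, $g\in\Ga/\Ga_1$, land in pairwise disjoint ergodic components, so the union $\bigcup_g gE_n$ being trivial in $X$ immediately forces $E_n$ to be trivial in $X_1$. For the $\La$-version the translates $t_iE_n$, $t_i\in\Ga_1/\La$, all live inside the same component $X_1$ and overlap, and in the non-p.m.p.\ setting quasi-invariance gives no control on $\mu(t_iE_n)$ in terms of $\mu(E_n)$; the naive union argument then only tells you $\mu(\bigcup_i t_iE_n)\to\mu(X_1)$, which is no contradiction. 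Since a $v_n$ central in $\La\ltimes L^\infty(X_1)$ only produces a $\La$-almost-invariant $v_n^e$, the $\Ga_1$-statement does not suffice. This missing step is exactly the Claim the paper proves, by choosing $\ve>0$ so small that $\mu(E)\le\ve$ implies $\sum_i\mu(t_iE)\le1/2$ and exploiting that $f_n=\sum_i 1_{t_iE_n}$ is integer-valued and asymptotically $\Ga$-invariant; combined with the Jones--Schmidt fact that a non-strongly-ergodic ergodic component admits asymptotically invariant sets of arbitrarily small positive measure, this closes the argument. You should supply this (or quote it) before asserting $\La$-strong ergodicity of the component.

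The rest of your route is correct but genuinely different from, and somewhat heavier than, the paper's. You work with an arbitrary bounded central sequence, killing the finitely many $t\in\La_{\mathrm{nac}}\setminus\{e\}$ by a measurable three-colouring from essential freeness and the $e$-term by strong ergodicity, and you return to $\Ga$ via normal core, Morita equivalence to the corner $1_{X_1}M1_{X_1}$, and the fact that an outer crossed product of a full factor by a finite group is full (which in the type $\mathrm{III}$ / ultraproduct setting deserves a reference or a short argument via $N'\cap N^\omega=\IC 1$ and non-approximate-innerness of outer automorphisms of full factors). The paper instead quotes \cite[Lemma 5.1]{hi}, which directly provides a unitary central sequence with $\sum_{t\in F}|w_n^t|^2\to0$ for \emph{every} finite $F\subset\La$ -- so the finite set $\La_{\mathrm{nac}}$ and the $e$-coefficient are handled for free and Theorem~\ref{thm} alone yields the contradiction $1\to 0$ -- and it transfers fullness up the finite-index inclusion $\La\ltimes L^\infty(X)\subset\Ga\ltimes L^\infty(X)$ via the Pimsner--Popa inequality for $E^\omega$ together with \cite[Corollary 2.6]{hr}, which requires neither normality of $\La$ nor outerness of the quotient action. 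Your reduction of $\cC_\Omega$-relative amenability to ordinary amenability and its exclusion by Connes--Feldman--Weiss matches the paper.
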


\begin{proof}
Let $N=\Ga\ltimes L^\infty(X)$ and $M=\La\ltimes L^\infty(X)$. 
Then, $M\subset N$ is a finite index inclusion with 
a canonical normal conditional expectation $E$ from $N$ onto $M$. 
We first observe that it suffices to show that $M'\cap M^\omega$ 
is finite-dimensional for a non-principal ultrafilter $\omega$. 
(We refer the reader to \cite[Section 2]{hi} for an account of 
the central sequence algebra $M'\cap M^\omega$.)
Indeed, if it is so, then $N'\cap N^\omega$ is finite dimensional 
since the map $E^\omega$ from 
$N'\cap N^\omega$ into $M'\cap M^\omega$ satisfies 
the Pimsner--Popa inequality (\cite{pp}). 
Since $N$ is a factor, this implies that $N$ is full by \cite[Corollary 2.6]{hr}. 

Thus we are left to show that $M$ is a direct sum of finitely many full factors 
(and hence $M'\cap M^\omega$ is finite-dimensional). 
For this, we note that if $\Lambda\acts(X_0,\mu_0)$ is a strongly ergodic 
and essentially free action, then the crossed product 
$M_0:=\La\ltimes L^\infty(X_0,\mu_0)$ is a full factor. 
Indeed, by \cite[Lemma 5.1]{hi}, if $M_0$ were not full, 
then there would be a unitary central sequence $(w_n)_n$ in $M_0$ 
such that $\sum_{t\in F}|w_n^t|^2\to0$ ultrastrongly for every finite 
subset $F\subset\La$. 
But then Theorem~\ref{thm}, applied to $\Omega=\La\setminus\La_{\mathrm{nac}}$ 
(note that we may assume that $(X_0,\mu_0)$ is non-atomic and 
thus $\La$-action on it is non-amenable), 
implies that 
$1_{M_0}=\sum_{t\in \La}|w_n^t|^2\to0$, which is absurd. 
This proves $M_0$ is a full factor. Therefore the proof of Corollary 
is complete once we prove the following claim, 
which is probably known to experts. 
\begin{claim}
The restriction $\La\acts(X,\mu)$ of the strongly ergodic action of 
$\Ga$ to a finite index subgroup $\La$ decomposes into finitely many ergodic components 
and each of the ergodic components is strongly ergodic.
\end{claim}

If $\La\acts(X,\mu)$ is not a union of finitely many ergodic components, 
then for any $\ve>0$, there is a $\La$-invariant measurable subset $E\subset X$ 
such that $0<\mu(E)<\ve$. 
Also, if there is a non-null ergodic component $X_0\subset X$ 
which is not strongly ergodic, then for any $\ve>0$, 
there is an asymptotically $\Lambda$-invariant sequence $(E_n)_n$ 
of measurable subsets of $X_0$ 
such that $0< \lim_n\mu(E_n) <\ve$ (see, e.g., the proof of \cite[Lemma 2.3]{js}). 
Therefore, to prove Claim, it suffices to show that there is $\ve>0$ such that 
any asymptotically $\Lambda$-invariant sequence $(E_n)_n$ 
with $\inf_n\mu(E_n)>0$ satisfies $\liminf_n\mu(E_n)\geq\ve$.
Now, let $\{ t_0,\ldots,t_d\}$ be a system of representatives 
of the left cosets $\Ga/\La$, and take $\ve>0$ such that 
$\mu(E)\le\ve$ implies $\sum_i\mu(t_iE)\le1/2$. 
Then for any asymptotically $\La$-invariant sequence $(E_n)_n$ 
with $\inf_n\mu(E_n)>0$, 
the functions $f_n:=\sum_i 1_{t_i E_n}$ satisfy 
$\|s\cdot f_n-f_n \|_{L^1(\mu)}\to0$ for every $s\in\Ga$, 
and so by strong ergodicity, $\| f_n - \int f_n\,d\mu\|_{L^1(\mu)}\to0$. 
%
%
This implies $\liminf\mu(E_n)\geq\ve$ and Claim is proved. 
\end{proof}

\begin{exa}\label{exa:hyp}
If $\Ga$ is a torsion-free hyperbolic group or a torsion-free discrete 
subgroup of a simple Lie group of rank one, then $\Ga_{\mathrm{nac}}\subset\{e\}$. 
Indeed, any non-trivial centralizer subgroup is elementary and hence is abelian. 
More generally, if $\Ga$ is a group acting with finite quotient on a fine 
hyperbolic graph $K$ in such a way that 
every vertex stabilizer is amenable and the neutral element is the only element 
which fixes infinitely many points in the boundary $\partial K$ of $K$, 
then $\Ga_{\mathrm{nac}}\subset\{e\}$. 
For the proof recall that the Bowditch compactification $\Delta K := K\cup\partial K$ 
is topologically amenable (\cite[Theorem 1]{relhyp}). 
Hence any subgroup $\La\le\Ga$ which admits a non-empty $\La$-invariant 
finite subset of $\Delta K$ is amenable. 
Let $t\in\Ga$ be such that $C_\Ga(t)$ is non-amenable and 
denote by $\mathrm{Fix}(t)$ the $t$-fixed points of $\Delta K$. 
Since $C_\Ga(t)$ has unbounded orbits in $K$, 
the closed $C_\Ga(t)$-invariant subset $\mathrm{Fix}(t)$ contains 
infinitely many boundary points.  By assumption $t=e$. 
This applies to a torsion-free relatively hyperbolic group and an amalgamated 
free product $\Ga:=\Ga_1*_\Lambda\Ga_2$ of amenable groups $\Ga_i$ 
over a common malnormal subgroup $\Lambda$.
\end{exa}

\begin{exa}
If $\Ga$ is a subgroup of $\SL(2,F)$ for a field $F$, then 
$\Ga_{\mathrm{nac}}\subset\{\pm I\}$. 
Indeed, by considering the Jordan normal form, it is not hard to see that 
the centralizer subgroup $C_{\SL(2,F)}(g)$ of 
any element $g\in\SL(2,F)\setminus\{\pm I\}$ is abelian.
\end{exa}

\begin{exa}\label{exa:sl3}
If $\La$ is a subgroup of $\SL(3,\IZ)$ which contains no elements of order $2$, 
then $\La_{\mathrm{nac}}\subset\{e\}$. In particular, the finite-index subgroup 
$\Ga(3)=\ker(\SL(3,\IZ)\to\SL(3,\IZ/3\IZ))$ satisfies 
$\Ga(3)_{\mathrm{nac}}=\{e\}$. 
For a proof, suppose that $g\in\SL(3,\IZ)$ has 
a non-abelian centralizer subgroup in $\SL(3,\IR)$. 
Then, $g$ must be diagonalizable in $\SL(3,\IR)$ 
and has eigenvalues $(\lambda,\lambda,\lambda^{-2})$ for some $\lambda\in\IC$. 
Its characteristic polynomial $p(t)=t^3+at^2+bt-1$ has 
integer coefficients and satisfies 
$0=9p(\lambda) - (3\lambda+a)p'(\lambda) = 2(3b-a^2)\lambda - (9 + ab).$
So, if $3b-a^2\neq 0$, then $\lambda$ is rational.  
If $3b-a^2=0$, then $0=3p'(\lambda)=(3\lambda + a)^2$ and $\lambda$ is again rational. 
Hence in either case $p(\lambda)=0$ implies $\lambda\in\{\pm1\}$.
This proves $g^2=I$. 
Now, suppose for a contradiction that there is an element $g\in\Ga(3)$ of order $2$. 
Then, the largest integer $m$ such that $g-I \in 3^m\IM_3(\IZ)$ is positive, 
but this is in contradiction with $(g-I)^2=-2(g-I)$.
In fact it is well-known that the finite index subgroups 
$\Gamma(m)$ are torsion-free for all $m\geq3$. 
By Corollary~\ref{cor}, the factor $\SL(3,\IZ)\ltimes L^\infty(X)$ is full for every 
strongly ergodic essentially free action $\SL(3,\IZ)\acts(X,\mu)$. 
It is not clear if the same conclusion holds for $\SL(n\geq4,\IZ)$.
\end{exa}

\begin{exa}
In Corollary~\ref{cor} one can replace the condition that 
$\Lambda_{\mathrm{nac}}$ is finite with existence of a map $\zeta\colon \La\to \Prob(\La)$ 
such that $\lim_{t\to\infty}\|\zeta_{sts^{-1}} - s\zeta_t\|=0$ for every $s\in\La$. 
(In fact the latter condition is more general.) 
Indeed, if there is a unitary central sequence $(w_n)_n$ in $\La\ltimes L^\infty(X)$ 
such that $\sum_{t\in F}|w_n^t|^2\to0$ for every finite subset $F\subset\La$, 
then the action is amenable since the maps 
$\Phi_n\colon L^\infty(\La\times X) \to L^\infty(X)$ given by 
$\Phi_n(f)(x)=\sum_{t,p}|w_n^t(x)|^2\zeta_t(p)f(p,x)$ are approximately $\La$-equivariant.
We note that biexact groups satisfy the above property by Proposition 4.1 in \cite{kurosh}. 
\end{exa}
\section{Actions of lattices on homogeneous spaces}
In this section we consider the non-singular action 
$\Ga\acts G/H$ of a lattice $\Ga$ in a second countable locally compact group $G$ 
on a homogeneous space $G/H$. Here $H\le G$ is a closed subgroup and 
$G/H$ is equipped with a $G$-quasi-invariant measure, which is unique up to equivalence.
Generally speaking, one can relate the action $\Ga\acts G/H$ to 
the action $\Ga\backslash G\curvearrowleft H$ (see \cite[Section 3]{pv}). 
For example, if one is amenable, then so is the other (\cite[Remark 4.2]{pv}).
The following is the relative version of this fact. 

\begin{lem}\label{lem:ra}
Let $\Ga\acts G/H$ be as above and assume that it is amenable relative 
to a non-empty family $\cC$ of subgroups of $\Ga$. 
Then, there is an $H$-invariant state $\psi$ on 
$L^\infty(\bigsqcup_{\Lambda\in\cC} \La\backslash G)$, 
where $H$ acts on $\bigsqcup_{\Lambda\in\cC} \La\backslash G$ diagonally from the right. 
Moreover, if there is a normal $\Ga$-equivariant conditional expectation from 
$L^\infty(K\times G/H)$ onto $L^\infty(G/H)$, where $K=\bigsqcup_{\La\in\cC} \Ga/\La$, 
then the $H$-invariant state $\psi$ can be taken normal. 
\end{lem}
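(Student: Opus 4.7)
The plan is to extend the conditional expectation $\Phi\colon L^\infty(K\times G/H)\to L^\infty(G/H)$ to a $\Ga$-equivariant and right $H$-equivariant conditional expectation $\tilde\Phi\colon L^\infty(K\times G)\to L^\infty(G)$, restrict to $\Ga$-invariants, and then integrate against the right $G$-invariant probability measure $\mu_{\Ga\backslash G}$ on $\Ga\backslash G$ (which exists because $\Ga$ is a lattice, forcing $G$ to be unimodular). The reduction relies on the identification $\Ga\backslash(K\times G)\cong\bigsqcup_{\La\in\cC}\La\backslash G$ via $[r\La,g]\mapsto\La r^{-1}g$, which gives $L^\infty(\bigsqcup_{\La\in\cC}\La\backslash G) = L^\infty(K\times G)^\Ga$ as right $H$-modules (with $\Ga$ acting diagonally on $K\times G$ from the left and $H$ acting trivially on $K$ and by right translation on $G$); similarly $L^\infty(\Ga\backslash G)=L^\infty(G)^\Ga$.

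To construct $\tilde\Phi$, I would encode $\Phi$ as a weak-$*$ measurable $\Ga$-equivariant family $(m_{yH})_{yH\in G/H}$ of states on $\ell_\infty(K)$ by $\Phi(F)(yH)=m_{yH}(F(\cdot,yH))$; a short computation translates $\Ga$-equivariance of $\Phi$ to $m_{syH}=s_*m_{yH}$ for $s\in\Ga$. Then define
\[
\tilde\Phi(F)(g) := m_{gH}\bigl(F(\cdot,g)\bigr),\qquad F\in L^\infty(K\times G),\ g\in G.
\]
Direct verification shows $\tilde\Phi$ is a conditional expectation, left $\Ga$-equivariant (using exactly $m_{syH}=s_*m_{yH}$), and right $H$-equivariant (because $m_{gH}$ depends only on the coset $gH$). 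When $\Phi$ is normal, normality of $\tilde\Phi$ is cleanest to see from the abstract description $\tilde\Phi=\Phi\otimes_{L^\infty(G/H)}\id$ in the amalgamated tensor product $L^\infty(K\times G)=L^\infty(K\times G/H)\otimes_{L^\infty(G/H)}L^\infty(G)$.

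Restricting $\tilde\Phi$ to $\Ga$-invariants gives a right $H$-equivariant conditional expectation $\tilde\Phi^\Ga\colon L^\infty(\bigsqcup_{\La\in\cC}\La\backslash G)\to L^\infty(\Ga\backslash G)$, and I would finally set
\[
\psi(f) := \int_{\Ga\backslash G}\tilde\Phi^\Ga(f)\,d\mu_{\Ga\backslash G}.
\]
Then $\psi$ is a state; it is right $H$-invariant since $\mu_{\Ga\backslash G}$ is right $G$-invariant (in particular $H$-invariant) and $\tilde\Phi^\Ga$ is $H$-equivariant; and if $\Phi$ is normal then so are $\tilde\Phi$, its restriction to $\Ga$-invariants, and integration against the finite measure $\mu_{\Ga\backslash G}$, so $\psi$ is normal. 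I expect the main technical nuisance to be making rigorous the measurable-family description of $\Phi$ and verifying that $\tilde\Phi(F)$ genuinely defines an element of $L^\infty(G)$ modulo null sets; both are standard disintegration arguments for standard Borel spaces, but they require careful bookkeeping of the measurable structure on $G/H$.
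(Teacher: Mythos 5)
Your overall architecture is exactly the paper's: build a $(\Ga\times H)$-equivariant unital positive map $L^\infty(K\times G)\to L^\infty(G)$ extending $\Phi$, pass to $\Ga$-invariants via the identification $L^\infty(K\times G)^\Ga\cong L^\infty(\bigsqcup_{\La\in\cC}\La\backslash G)$ and $L^\infty(G)^\Ga\cong L^\infty(\Ga\backslash G)$, and integrate against the $G$-invariant probability measure on $\Ga\backslash G$. The formal equivariance computations with $m_{syH}=s_*m_{yH}$ are also correct. The gap is in the central step, the construction of $\tilde\Phi$, and it matters precisely in the main case of the lemma, where $\Phi$ is \emph{not} normal. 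First, disintegrating a possibly singular conditional expectation $\Phi$ into a weak-$*$ measurable equivariant field of means $(m_{yH})$ is not a ``short computation'': it requires a linear lifting of $L^\infty(G/H)$ to handle the non-separable algebra $\ell_\infty(K)$, and the equivariance $m_{syH}=s_*m_{yH}$ is then only obtained almost everywhere with null sets a priori depending on the test function; this is essentially the content of \cite[Proposition 4.1]{zimmer:invent} and should at least be cited rather than treated as routine. Second, and more seriously, your defining formula $\tilde\Phi(F)(g):=m_{gH}(F(\cdot,g))$ involves a \emph{diagonal} evaluation: weak-$*$ measurability of $y\mapsto m_y$ only guarantees measurability of $g\mapsto m_{gH}(f)$ for each \emph{fixed} $f\in\ell_\infty(K)$, whereas here the argument $F(\cdot,g)\in\ell_\infty(K)$ varies with $g$. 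For a singular mean this pairing is not determined in any countably additive way by the measurable functions $g\mapsto F(k,g)$, and a general $F\in L^\infty(K\times G)$ is not a norm limit of finite sums $\sum_j f_j\otimes 1_{A_j}$, so measurability of $\tilde\Phi(F)$ does not follow from standard disintegration (which is a statement about measures and normal states, not about means). In the normal case your argument does go through (the $m_{gH}$ are then $\ell_1(K)$-densities depending measurably on $gH$, and the diagonal pairing is a convergent measurable sum), so your proposal essentially covers only the ``moreover'' clause.

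The paper is engineered to avoid exactly this. It never disintegrates $\Phi$: writing $L^\infty(G)\cong L^\infty(G/H\times H)$ via a Borel lifting $\tau$, it defines $\Phi\otimes\id_{L^\infty(H)}$ purely by slice maps, $(\xi\otimes\eta)((\Phi\otimes\id)(f))=\xi(\Phi((\id\otimes\eta)(f)))$, which makes sense for non-normal $\Phi$. Equivariance for the cocycle-twisted actions is then proved by two ingredients: a countable decomposability property that every conditional expectation enjoys, namely $\Phi(\sum_i f_i(1_K\otimes q_i))=\sum_i\Phi(f_i)q_i$ for countable partitions of unity $(q_i)$ in $L^\infty(G/H)$, and an approximation of the twisted action by countably many ``constant'' translates on pieces $X_i$ where the cocycle $\beta(s^{-1},\cdot)$ is nearly constant. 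If you want to salvage your route, you should either restrict to the normal case, or replace the pointwise formula for $\tilde\Phi$ by a duality/slice-map definition and then confront the same cocycle-approximation issue the paper handles.
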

\begin{proof}
Let $\Phi$ denote a $\Ga$-equivariant conditional expectation 
from $L^\infty(K\times G/H)$ onto $L^\infty(G/H)$. 
We fix a lifting $\sigma\colon K\to \Ga$ such that $\sigma(p)\Lambda = p$ for 
$p\in\Ga/\La\subset K$ and a measurable lifting $\tau\colon G/H\to G$. 
The corresponding measure space isomorphisms and cocycles are denoted as follows: 
\begin{gather*}
\theta\colon K\times G/H \times H \to K \times G,\quad 
\theta(p,x,y) = (p,\sigma(p)^{-1}\tau(x)y);\\
\theta'\colon G\to G/H \times H,\quad \theta'(g)=(gH,\tau(gH)^{-1}g);\\
\alpha\colon\Ga\times K\ni(s,p)\mapsto \sigma(sp)^{-1}s\sigma(p) \in \Lambda 
\mbox{ for $s\in\Ga$ and $p\in\Ga/\La\subset K$};\\
\beta\colon \Ga\times G/H \ni(s,x)\mapsto \tau(sx)^{-1}s\tau(x)\in H.
\end{gather*}
These maps satisfy the following relations. 
Let $s\cdot (p,x,y) \cdot h := (sp,sx,\beta(s,x)yh)$ for $(p,x,y)\in K\times G/H \times H$ 
and $s\cdot (p,g)\cdot h := (sp,\alpha(s,p)gh)$ for $(p,g)\in K\times G$.
Then, one has $\theta( s\cdot (p,x,y) \cdot h ) = s\cdot \theta(p,x,y)\cdot h$. 
Let $s\cdot(x,y)\cdot h := (sx,\beta(s,x)yh)$ for $(x,y)\in G/H\times H$. 
Then, one has $\theta'(sgh) = s\cdot\theta'(g)\cdot h$.

We claim that the conditional expectation 
$\Phi\otimes\id_{L^\infty(H)}$ from $L^\infty(K\times G/H\times H)$ 
onto $L^\infty(G/H\times H)$ satisfies 
$(\Phi\otimes\id_{L^\infty(H)})(s\cdot f\cdot h)
 = s\cdot (\Phi\otimes\id_{L^\infty(H)})(f)\cdot h$ for 
every $s\in\Ga$ and $h\in H$, 
where $(s\cdot f\cdot h)(p,x,y):=f(s^{-1}\cdot (p,x,y) \cdot h^{-1})$, etc.
Note that even though $\Phi$ may not be normal, 
the map $\Phi\otimes\id_{L^\infty(H)}$ is well-defined via the defining relation 
\[
(\xi\otimes\eta)((\Phi\otimes\id_{L^\infty(H)})(f))=\xi(\Phi((\id\otimes\eta)(f)))
\mbox{ for $(\xi,\eta)\in L^\infty(G/H)_*\times L^\infty(H)_*$}.
\]
(Of course $L^\infty(X,\mu)_*=L^1(X,\mu)$ 
under the duality coupling $\ip{f,\xi}=\int f\xi\,d\mu$, 
but we often regard $\xi\in L^1(X,\mu)$ as an ultraweakly 
continuous linear functional on $L^\infty(X)$.) 
We observe that for any countable partition $(q_i)_i$ of unity in $L^\infty(G/H)$ 
and any bounded sequence $(f_i)_i$ in $L^\infty(K\times G/H)$, one has 
$\Phi(\sum_i f_i (1_K\otimes q_i)) = \sum_i\Phi(f_i)q_i$.
Indeed, this follows from the fact that for any state $\xi\in L^\infty(G/H)_*$ one has 
\begin{align*}
|\xi(\Phi(\sum_{i\geq n} f_i (1_K\otimes q_i)))| 
 &\le (\xi\circ\Phi)( \bigl| \sum_{i\geq n} f_i (1_K\otimes q_i) \bigr| )\\
 &\le (\xi\circ\Phi)( (\sup_i\|f_i\|) \sum_{i\geq n}1_K\otimes q_i)\\
  &= (\sup_i\|f_i\|)\xi(\sum_{i\geq n}q_i)\to0.
\end{align*}
Now, let $s\in\Ga$ be given and take $\eta\in L^1(H,\nu)$ and $\ve>0$ arbitrary. 
Here $\nu$ denotes the left Haar measure of $H$. 
Let $(Y_i)_i$ be a countable measurable partition of $H$ together 
with $h_i\in Y_i$ such that $\|h\eta - h_i\eta \|_1<\ve$ for $h\in Y_i$, 
and put $X_i=\{ x\in G/H : \beta(s^{-1},x) \in H_i\}$.  
Then, for every $f\in L^\infty(K\times G/H\times H)$ with $\|f\|\le1$, 
one has
\[
\|(\id_{L^\infty(K\times G/H)}\otimes \eta)\bigl(s\cdot f - \sum_i ((s\otimes h_i^{-1})f) 1_{K\times X_i\times H}\bigr)\|_{L^\infty(K\times G/H)}<\ve,
\]
where $s$ acts on $L^\infty(K\times G/H)$ diagonally. 
Indeed, for every $i$ and a.e.\ $(p,x)\in K\times X_i$ one has 
\begin{align*}
((\id\otimes \eta)(s\cdot f))(p,x) 
 &= \int_H f(s^{-1}p,s^{-1}x,\beta(s^{-1},x)y)\eta(y)\,d\nu(y)\\
 &\approx_\ve \int_H f(s^{-1}p,s^{-1}x, h_i y)\eta(y)\,d\nu(y)\\
 &=(\id\otimes \eta)((s\otimes h_i^{-1})f)(p,x).
\end{align*}
Similarly, for every $g\in L^\infty(G/H\times H)$ with $\|g\|\le1$ one has 
\[
\|(\id_{L^\infty(G/H)}\otimes \eta)\bigl(s\cdot g - \sum_i ((s\otimes h_i^{-1})g) 1_{X_i\times H}\bigr)\|_{L^\infty(G/H)}<\ve.
\]
Hence 
\begin{align*}
(\id\otimes \eta)((\Phi\otimes\id)(s\cdot f))
 &= \Phi( (\id\otimes \eta)(s\cdot f) )\\
 &\approx_\ve \Phi( (\id\otimes \eta) (\sum_i ((s\otimes h_i^{-1})f) 1_{K\times X_i\times H}))\\
 &= \sum_i \Phi((\id\otimes \eta) ((s\otimes h_i^{-1})f))1_{X_i}\\
 &= \sum_i (\id\otimes\eta)((s\otimes h_i^{-1})((\Phi\otimes\id)(f)))1_{X_i}\\
 &\approx_\ve (\id\otimes\eta)(s\cdot (\Phi\otimes\id)(f)).
\end{align*}
Since $\eta$ and $\ve>0$ were arbitrary, one obtains 
$(\Phi\otimes\id)(s\cdot f)=s\cdot (\Phi\otimes\id)(f)$. 
That $(\Phi\otimes\id)(f\cdot h)=(\Phi\otimes\id)(f)\cdot h$ 
for $h\in H$ is obvious. 

We define $\Psi\colon L^\infty(K\times G)\to L^\infty(G)$ by 
$\Psi = \theta'_*\circ(\Phi\otimes\id_{L^\infty(H)})\circ\theta_*$. 
It may not be a conditional expectation, 
but it is a unital positive map, which is $(\Ga\times H)$-equivariant: 
\[
\Psi( s\cdot f\cdot h) = s\Psi(f)h 
\mbox{ for $s\in\Ga$, $h\in H$, and $f\in L^\infty(K\times G)$},
\]
where $(s\Psi(f)h)(g)=\Psi(f)(s^{-1}gh^{-1})$ for $g\in G$. 
We denote by $\iota$ the embedding 
of $L^\infty(\bigsqcup_{\La\in\cC} \La\backslash G)$ into 
$L^\infty(K\times G)$ given by 
$\iota(f)(p,g)= f(\Lambda g)$ for $\Lambda\in\cC$, $p\in\Ga/\La\subset K$, and $g\in G$. 
The map $\iota$ is $(\Ga\times H)$-equivariant, where $\Ga$ acts trivially on 
$\bigsqcup_{\La\in\cC} \La\backslash G$ and $H$ acts on it from the right. 
Thus $\Psi\circ\iota$ is an $H$-equivariant unital positive map from 
$L^\infty(\bigsqcup_{\La\in\cC} \La\backslash G)$ into 
$L^\infty(G)^\Ga\cong L^\infty(\Ga\backslash G)$. 
An $H$-invariant state $\psi$ can be obtained 
by composing $\Psi\circ\iota$ with the $G$-invariant probability measure on $\Ga\backslash G$. 
If $\Phi$ is normal, then so is $\psi$.
\end{proof}

\begin{lem}[cf.\ Proposition G in \cite{ioana}]\label{lem:GH}
Let $G$ be a second countable locally compact group, $\Ga\le G$ be a lattice, 
and $H\le G$ be a closed subgroup such that the action 
$\Ga\backslash G\curvearrowleft H$ is strongly ergodic. 
Then $\Ga\acts G/H$ is strongly ergodic. 
\end{lem}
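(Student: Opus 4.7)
The plan is to argue by contrapositive. Suppose $\Ga\acts G/H$ is not strongly ergodic: there is an asymptotically $\Ga$-invariant sequence $(E_n)_n$ of Borel sets in $G/H$ with $\inf_n\mu(E_n)(1-\mu(E_n))>0$. I would construct an analogous sequence $(F_n)_n$ in $\Ga\backslash G$ witnessing failure of strong ergodicity of the right $H$-action, using the classical measure-equivalence picture that links the two actions via Haar measure on $G$.

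Concretely, I would fix a Borel fundamental domain $\cF\subseteq G$ for the left $\Ga$-action, identified with $\Ga\backslash G$ as measure spaces, and arranged (via a Bruhat-type construction) so that the $H$-slices of $\cF$ have essentially bounded Haar measure as one varies the base point $x\in G/H$. Lift $E_n$ to the right-$H$-invariant subset $\tilde E_n:=p^{-1}(E_n)\subseteq G$, where $p\colon G\to G/H$ is the quotient, and set $F_n:=\tilde E_n\cap\cF\subseteq\cF\cong\Ga\backslash G$.

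For asymptotic $H$-invariance, unpack the right $H$-action on $\cF$ through the cocycle $c\colon\cF\times H\to\Ga$ defined by $gh\in c(g,h)\cdot\cF$. A short computation using right-$H$-invariance of $\tilde E_n$ gives
\[
F_n\bigtriangleup (F_n\cdot h)=\{g\in\cF: g\in\tilde E_n\bigtriangleup c(g,h^{-1})\tilde E_n\}\subseteq\bigcup_{c\in\Ga}\bigl(\cF\cap c\cF h\cap p^{-1}(E_n\bigtriangleup cE_n)\bigr).
\]
Since $\sum_{c\in\Ga}\mu_G(\cF\cap c\cF h)=\mu_G(\cF)$ is the finite covolume of $\Ga$, one splits off a finite piece of $c$'s carrying the bulk of the mass and controls the tail uniformly; each of the finitely many remaining terms $\mu_G(\cF\cap p^{-1}(E_n\bigtriangleup cE_n))\to 0$ by asymptotic $\Ga$-invariance of $(E_n)$ combined with the bounded-slice property.

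Non-triviality of $F_n$ follows from the disintegration formula $\mu_{\Ga\backslash G}(F_n)=\mu_G(\cF\cap p^{-1}(E_n))$ together with $\mu(E_n)(1-\mu(E_n))\ge c>0$ and the uniform slice bound. The main obstacle is the cocycle bookkeeping for non-uniform lattices, where infinitely many $c\in\Ga$ contribute and one must pass the asymptotic $\Ga$-invariance through the tail of the cocycle uniformly; this is essentially the technical content of Ioana's Proposition G in \cite{ioana} cited in the statement.
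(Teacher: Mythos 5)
Your overall strategy is the same as the paper's: pass to a fundamental domain $\cF$ for $\Ga$, set $F_n=p^{-1}(E_n)\cap\cF$, and transfer approximate $\Ga$-invariance of $(E_n)_n$ to approximate $H$-invariance of $(F_n)_n$ via the $\Ga$-valued cocycle. The invariance half of your sketch is essentially correct and, despite your closing worry, already complete in outline: each tail term is bounded by $\mu_G(\cF\cap c\cF h)$ independently of $n$, and $\sum_{c\in\Ga}\mu_G(\cF\cap c\cF h)=\mu_G(\cF)<\infty$, so the tail is uniformly small, while the finitely many main terms tend to $0$ by absolute continuity of the pushforward of $\lambda_G|_{\cF}$ to $G/H$ with respect to $\mu$ --- no slice bound is needed there. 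The paper packages the same idea differently, choosing a compact $L\subseteq\Ga\backslash G$ with $\lambda(L)>1-\ve$ so that the cocycle takes only finitely many values on $L\times K$; this has the small advantage of giving the required uniformity over compact $K\subseteq H$ at once.

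The genuine gap is in the non-triviality of $(F_n)_n$. You derive it from a fundamental domain with ``essentially bounded $H$-slices.'' First, the existence of such a domain for an arbitrary second countable locally compact $G$, lattice $\Ga$, and closed $H$ is exactly the kind of claim that needs proof, and the appeal to a ``Bruhat-type construction'' does not supply one (for a non-uniform lattice the domain is non-compact and its $H$-slices can a priori degenerate). Second, even granting an essential upper bound $\nu_x(\cF_x)\le C$, the disintegration $\lambda(F_n)=\int_{E_n}\nu_x(\cF_x)\,d\mu(x)$ only yields $\lambda(F_n)\le C\,\mu(E_n)$, which is the wrong direction: $E_n$ could concentrate where the slices are small, so $\lambda(F_n)\to0$ is not excluded by $\mu(E_n)\ge c>0$. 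You would need a two-sided bound $0<\delta\le\nu_x(\cF_x)\le C$, which is harder still to arrange. The paper avoids all geometric input on the domain: if $\lambda_G(E_nH\cap Y)\to0$, then for each $s\in\Ga$ one has $\lambda_G(E_nH\cap sY)\approx\lambda_G(sE_nH\cap sY)=\lambda_G(E_nH\cap Y)\to0$, using approximate $\Ga$-invariance together with left-invariance of Haar measure; since the translates $sY$ tile $G$, this forces $\lambda_G(E_nH\cap Z)\to0$ for every finite-measure $Z$, hence $\mu(E_n)\to0$. Applying the same reasoning to the complements $X\setminus E_n$ rules out $\lambda(F_n)\to1$. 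You should replace the slice-bound argument by this one (or actually prove a two-sided slice bound, which does not seem feasible in this generality).
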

%
%
\begin{proof}
We first recall that a non-singular action $H\acts (Y,\nu)$ of a locally compact group $H$ 
on a probability space $(Y,\nu)$ is said to be strongly ergodic if 
any sequence $(E_n)_n$ of measurable subsets of $Y$ that is approximately $H$-invariant, 
i.e., $\nu(E_n\bigtriangleup hE_n)\to0$ uniformly on compact subsets of $H$, is trivial in the 
sense that $\nu(E_n)(1-\nu(E_n))\to0$. 

Now, take a Borel lifing $\tau\colon \Ga\backslash G\to G$ 
and let $Y=\tau(\Ga\backslash G)$ be the corresponding $\Ga$-fundamental domain. 
Since $G$ is $\sigma$-compact, we may assume that $\tau(L)$ is relatively compact 
for every compact subset $L\subset \Ga\backslash G$. 
The Haar measure $\lambda_G$ of $G$ is normalized so that $\lambda_G(Y)=1$. 
Then, the formula $\lambda(\Ga A)=\lambda_G(\Ga A\cap Y)$ for 
measurable subsets $A\subset G$ defines the $G$-invariant 
probability measure $\lambda$ on $\Ga\backslash G$. 
Assume that there is a non-trivial approximately $\Ga$-invariant 
sequence $(E_n)_n$ of measurable subsets of $G/H$. 
We will prove that $(\Ga(E_nH\cap Y))_n$ is a non-trivial approximately 
$H$-invariant sequence of measurable subsets of $\Ga\backslash G$. 

Recall that $L^\infty(G,\lambda_G)\cong L^\infty(G/H \times H, \mu\otimes\nu)$, 
where $\mu$ and $\nu$ are quasi-invariant probability measures. 
Since $(\mu\otimes\nu)( sE_nH \bigtriangleup E_nH )=\mu(sE_n\bigtriangleup E_n)\to0$ 
for every $s\in\Ga$, one has
$\lambda_G( (sE_nH \bigtriangleup E_nH) \cap Z)\to0$ for 
any $Z$ with $\lambda_G(Z)<\infty$. 
Thus, if $\lambda_G(E_nH\cap Y)\to0$, then 
$\lambda_G(E_nH\cap sY)\approx \lambda_G(sE_nH\cap sY)=\lambda_G(E_nH\cap Y)\to0$
for every $s\in\Ga$, which means that $\mu(E_n)\to0$. 
Therefore $(\Ga(E_nH\cap Y))_n$ is non-trivial if $(E_n)_n$ is non-trivial. 

Let $\beta\colon \Ga\backslash G\times H\ni(y,h)\mapsto \tau(y)h\tau(yh)^{-1}\in\Ga$ be the cocycle associated with $\tau$ that satisfies 
$\tau(y)h=\beta(y,h)\tau(yh)$ for $y\in \Ga\backslash G$ and $h\in H$. 
Let a compact subset $K\subset H$ and $\ve>0$ be given. 
Take a compact subset $L\subset \Ga\backslash G$ 
such that $\lambda(L)>1-\ve$. 
Then, $F:=\{\beta(y,h) : y\in L, h\in K\}$ is a finite subset of $\Ga$, 
since it is relatively compact in $G$. 
Thus, if $n$ is large enough, then one has 
$\lambda_G( (FE_n H\cap Y) \setminus (E_n H\cap Y))<\ve$.
So, for every $h\in K$ one has 
\begin{align*}
\lambda( \Ga(E_n H\cap Y)h \setminus \Ga(E_n H\cap Y) )
 &\approx_\ve \lambda( \Ga(E_n H\cap \tau(L))h \setminus \Ga(E_n H\cap Y) )\\
 &\le \lambda_G( (FE_n H\cap Y) \setminus (E_n H\cap Y))\\
 &\approx_\ve 0.
\end{align*}
This means that $(\Ga(E_nH\cap Y))_n$ is approximately $H$-invariant. 
\end{proof}

\begin{thm}\label{thm2}
Let $G$ be a connected simple Lie group with finite center $\cZ(G)$, 
$\Ga\le G$ be a lattice, and $H\le G$ be a closed non-amenable 
subgroup. 
Then, the action $\Ga\acts G/H$ is strongly ergodic, 
and if $\Ga \cap \cZ(G) \cap H =\{e\}$, then $\Ga\ltimes L^\infty(G/H)$ is a full factor. 
\end{thm}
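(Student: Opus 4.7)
For strong ergodicity, I would reduce via Lemma~\ref{lem:GH} to the right $H$-action on $\Ga\backslash G$. If $(E_n)_n$ were a non-trivial approximately $H$-invariant sequence of measurable subsets of $\Ga\backslash G$, the normalized vectors $(1_{E_n}-\lambda(E_n))/\sqrt{\lambda(E_n)(1-\lambda(E_n))}$ would be approximately $H$-invariant unit vectors in $L^2_0(\Ga\backslash G)$, where $\lambda$ is the $G$-invariant probability measure. I would rule this out via Howe--Moore: the $G$-representation on $L^2_0(\Ga\backslash G)$ has matrix coefficients in $C_0(G)$, and since $H$ is closed and non-compact (being non-amenable), the restriction is $C_0$ on $H$. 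Since $G$ is connected simple with finite center, Cowling's tensor-power trick implies that a sufficiently large tensor power is weakly contained in $\lambda_G$, so its restriction to non-amenable $H$ is weakly contained in $\lambda_H$, which has no almost-invariant vectors.

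For factoriality, strong ergodicity already gives ergodicity, so it remains to verify essential freeness. The stabilizer of $gH\in G/H$ in $\Ga$ is $\Ga\cap gHg^{-1}$, so for each $\gamma\in\Ga\setminus\{e\}$ I need to show $\{g\in G:g^{-1}\gamma g\in H\}$ is Haar-null. If $\gamma\in\Ga\cap\cZ(G)$ this set is either $G$ or empty, and the hypothesis $\Ga\cap\cZ(G)\cap H=\{e\}$ kills the former case whenever $\gamma\neq e$. For non-central $\gamma$, simplicity of $G$ makes the proper closed subgroup $H$ non-normal, so the conjugacy class $[\gamma]$ is not contained in $H$; hence $H\cap[\gamma]$ is a closed nowhere-dense, and thus null, subset of the smooth submanifold $[\gamma]$, and its preimage in $G$ under the $C_G(\gamma)$-fibration $g\mapsto g^{-1}\gamma g$ is Haar-null.

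To establish fullness I follow the strategy of Corollary~\ref{cor}. Were $N:=\Ga\ltimes L^\infty(G/H)$ not full, Houdayer--Isono's Lemma~5.1 in~\cite{hi} would provide a unitary central sequence $(w_n)_n$ in $N$ with $\sum_{t\in F}|w_n^t|^2\to0$ ultrastrongly for every finite $F\subset\Ga$. Setting $\Omega:=\Ga\setminus(\Ga\cap\cZ(G))$, which is conjugacy-invariant with $|\Ga\setminus\Omega|<\infty$ since $|\cZ(G)|<\infty$, an application of Theorem~\ref{thm} to $\Omega$ would yield $\sum_{t\in\Omega}|w_n^t|^2\to0$, contradicting $1=\sum_t|w_n^t|^2$. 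Using the ergodicity established above, the hypothesis of Theorem~\ref{thm} reduces to the statement that $\Ga\acts G/H$ is not $\cC_\Omega$-relatively amenable.

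The main obstacle is this last reduction. By Lemma~\ref{lem:ra}, relative amenability would produce an $H$-invariant state on $L^\infty(\bigsqcup_{t\in\Omega} C_\Ga(t)\backslash G)$. For each $t\in\Omega$, Borel density forces $C_G(t)\subsetneq G$ to be a proper closed subgroup, and the equivariant surjections $C_\Ga(t)\backslash G \to C_G(t)\backslash G$ let me push this state forward. The representation $L^2(C_G(t)\backslash G)\cong\mathrm{Ind}_{C_G(t)}^G 1$ has no $G$-invariant vectors (since $C_G(t)$ is not cocompact in simple $G$), so by Howe--Moore its matrix coefficients lie in $C_0(G)$ and Cowling's tensor-power trick makes it tempered; restriction to non-amenable $H$ then admits no almost-invariant unit vectors. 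Putting these together so that the direct sum $\bigoplus_{t\in\Omega} L^2(C_G(t)\backslash G)$ is uniformly tempered as an $H$-representation, forbidding the $H$-invariant mean, is the technical heart of the argument.
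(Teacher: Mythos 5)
Your skeleton for the fullness part coincides with the paper's: extract a unitary central sequence with $\sum_{t\in F}|w_n^t|^2\to0$ via \cite[Lemma 5.1]{hi}, apply Theorem~\ref{thm} with $\Omega=\Ga\setminus\cZ(\Ga)$, and contradict $1=\sum_t|w_n^t|^2$; the needed input is indeed non-relative-amenability detected through Lemma~\ref{lem:ra}. (For strong ergodicity you are essentially re-deriving \cite[Proposition 4.1]{bv}, which the paper just cites.) However, two steps are genuinely broken. For factoriality, your proof of essential freeness does not work: a closed nowhere-dense subset of a manifold need not be null (fat Cantor sets), and you have not even shown that $H\cap[\gamma]$ is nowhere dense in $[\gamma]$ --- a proper closed subset of a connected manifold can have nonempty interior. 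Essential freeness is true, but the correct argument is the dimension count on $\bigcap_{i}g_iHg_i^{-1}$ of Remark~\ref{rem:ip}. The paper in fact avoids essential freeness altogether: it kills the non-central Fourier coefficients of a central unitary by noting that otherwise the proof of Theorem~\ref{thm} yields a \emph{normal} equivariant conditional expectation, hence by Lemma~\ref{lem:ra} a normal $H$-invariant state on $L^\infty(\bigsqcup_{\La\in\cC_\infty}\La\backslash G)$, hence a nonzero $H$-invariant vector in $L^2$, which Moore's ergodicity theorem promotes to a $G$-invariant vector --- impossible since the $\La$'s have infinite covolume.

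The step you flag as the ``technical heart'' is the second genuine gap, and your proposed route cannot close it in general. Howe--Moore plus Cowling's tensor-power trick requires uniform $L^p$-decay of coefficients over all representations without invariant vectors, which holds only when $G$ has property $(\mathrm{T})$; in rank one, $\mathbf{1}_G$ is not isolated in $\widehat{G}$, so no single tensor power tempers the whole family $\bigoplus_{t\in\Omega}L^2(C_G(t)\backslash G)$, and the claimed uniform statement is false, not merely unproved. The paper resolves this with a case split you are missing: for rank-one $G$ it abandons the representation-theoretic route and instead applies Corollary~\ref{cor} together with Example~\ref{exa:hyp} (a torsion-free finite-index subgroup $\La$ of a rank-one lattice has all nontrivial centralizers elementary, hence $\La_{\mathrm{nac}}\subseteq\{e\}$); only when $G$ has property $(\mathrm{T})$ does it invoke Cowling and Moore, and there it works directly with $L^2(\La\backslash G)$ for the discrete infinite-index subgroups $\La\in\cC_\infty$ (which automatically have no invariant vectors by infinite covolume) rather than with the homogeneous spaces $C_G(t)\backslash G$.
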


\begin{proof}
Strong ergodicity of $\Ga\acts G/H$ follows from 
that of $\Ga\backslash G\curvearrowleft H$ by Lemma~\ref{lem:GH}. 
Since the latter action is finite measure preserving, 
strong ergodicity follows if the unitary representation of $H$ 
on $L^2_0(\Ga\backslash G)$ does not weakly contain 
the trivial representation $\mathbf{1}_H$ (\cite{cw,schmidt}). 
By \cite[Proposition 4.1]{bv} the latter holds true as long as $H$ is non-amenable. 

For the second assertion, we note that the assumption $\Ga \cap \cZ(G) \cap H =\{e\}$ 
is equivalent to that the action $\cZ(\Ga)\acts G/H$ is free. 
Here we note that
\[
\Ga \cap \cZ(G) = \cZ(\Ga) = \{ t\in\Ga : [\Ga : C_\Ga(t)]<\infty\},
\]
by the Borel density theorem (applied to the lattice $C_\Ga(t)$). 
Let $\cC_\infty$ denote the family of infinite index subgroups of $\Ga$ 
and $K:=\bigsqcup_{\La\in\cC_\infty}\Ga/\La$.
To prove that $\Ga\ltimes L^\infty(G/H)$ is a factor, take a unitary central element $w$, 
and expand it as $w=\sum_{t\in\Ga}\lambda(t)w^t$.
Since $\cZ(\Ga)\acts G/H$ is free, one has $w^t=0$ for all $t\in\cZ(\Ga)$ except 
that $w^e\in\IC1$ (by ergodicity). 
If $w\neq w^e$, then by the proof of Theorem~\ref{thm}, 
there is a normal $\Ga$-equivariant conditional expectation 
from $L^\infty(K\times G/H)$ onto $L^\infty(G/H)$. 
By Lemma~\ref{lem:ra}, this gives rise to a normal $H$-invariant state $\psi$ on 
$L^\infty(\bigsqcup_{\La\in\cC_\infty}\La\backslash G)$, 
which in turn provides a non-zero $H$-invariant vector 
in $L^2(\bigsqcup_{\La\in\cC_\infty}\La\backslash G)$. 
But such a vector is also $G$-invariant by Moore's ergodicity theorem, 
in contradiction with the fact that $\La$'s have infinite covolume in $G$. 

Next, we prove that the factor $\Ga\ltimes L^\infty(G/H)$ is full. 
The case of a rank one Lie group is already covered 
by Corollary~\ref{cor} and Example~\ref{exa:hyp} 
(note that the essential freeness assumption in Corollary~\ref{cor} is 
only used to assure factoriality of $\Ga\ltimes L^\infty(X)$ 
and $\La\ltimes L^\infty(X_0)$, and thus can be dispensed with by the above result).
Thus we may assume that $G$ has Kazhdan's property $\mathrm{(T)}$. 
Then, by \cite{cowling,moore}, the unitary representation of $H$ on 
$\bigoplus_{\La\in\cC_\infty}L^2(\La\backslash G)$ 
does not weakly contain $\mathbf{1}_H$.
(We note that we can avoid the use of this heavy machinery if $H$ 
has a non-compact subgroup with relative property $(\mathrm{T})$.)
This means that $L^\infty(\bigsqcup_{\La\in\cC_\infty} \La\backslash G)$ 
does not admit an $H$-invariant state (see \cite{schmidt}), and hence by Lemma~\ref{lem:ra} 
the action $\Ga\acts G/H$ is not amenable relative to $\cC_\infty$.
Now, suppose for a contradiction that the factor $\Ga\ltimes L^\infty(G/H)$ is not full and 
there is a unitary central sequence $(w_n)_n$ in $\Ga\ltimes L^\infty(G/H)$ 
such that $\sum_{t\in\cZ(\Ga)} |w_n^t|^2\to0$ (\cite[Lemma 5.1]{hi}). 
Then, by Theorem~\ref{thm}, this would imply $1=\sum_{t\in\Ga} |w_n^t|^2\to0$, 
which is absurd. 
%
\end{proof}

\begin{exa}
Consider the linear action of $\Ga:=\SL(n,\IZ)$ on $\IR^n$. 
Since the action extends to a measure preserving and 
essentially transitive action of $G:=\SL(n,\IR)$, it is isomorphic 
to the action $\Ga\acts G/H$, where $H\cong\SL(n-1,\IR)\ltimes\IR^{n-1}$ 
is the stabilizer subgroup of $G\acts\IR^n$ at $(1,0,\ldots,0)^T \in \IR^n$. 
If $n\geq 3$, then $H$ is non-amenable and $\SL(n,\IZ)\ltimes L^\infty(\IR^n)$ 
is a full factor by Theorem~\ref{thm2}. 
(If $n\le 2$, then $H$ is amenable and so is the action $\Ga\acts\IR^n$, 
see \cite[Remark 4.2]{pv}.)
Moreover, since $\SL(n,\IZ)$-action is measure preserving, 
$\SL(n,\IZ)\ltimes L^\infty(\IR^n)$ is a type $\mathrm{II}_\infty$ factor 
with a continuous trace-scaling $\IR_+^\times$-action, coming from 
the diagonal $\IR_+^\times$-action on $\IR^n$ by multiplication. 
The corresponding crossed product is a 
type $\mathrm{III}_1$ full factor that is isomorphic to $\SL(n,\IZ)\ltimes L^\infty(S^{n-1})$, 
where $\SL(n,\IZ)$ acts naturally on the sphere $S^{n-1}\cong(\IR^n\setminus\{0\})/\IR_+^\times$ 
(which is also isomorphic to a homogeneous space of $\SL(n,\IR)$). 
Indeed, since the $\IR_+^\times$-action is smooth and commutes with the $\SL(n,\IZ)$-action, 
one has natural isomorphisms 
\begin{align*}
(\SL(n,\IZ)\ltimes L^\infty(\IR^n)) \rtimes \IR_+^\times
 &\cong \SL(n,\IZ)\ltimes(L^\infty(S^{n-1} \times \IR_+^\times) \rtimes \IR_+^\times) \\
 &\cong \SL(n,\IZ)\ltimes(L^\infty(S^{n-1}) \otimes \IB(L^2(\IR_+^\times))) \\
 &\cong  (\SL(n,\IZ)\ltimes L^\infty(S^{n-1})) \otimes \IB(L^2(\IR_+^\times)).
\end{align*}
\end{exa} 

\begin{rem}\label{rem:ip} Adrian Ioana and Jesse Peterson kindly pointed 
out to the author that the action $\Ga\acts G/H$ as in Theorem~\ref{thm2} is essentially 
free, and moreover that for any connected Lie group $G$ and a closed subgroup $H\le G$ 
with the normal core $N:=\bigcap_{g\in G} gHg^{-1}$, 
the action $G/N\acts G/H$ is essentially free. 
Indeed, this follows from the fact that 
$\bigcap_{i=1}^{d+1} g_iHg_i^{-1}=N$ for a.e.\ $(g_1,\ldots,g_{d+1})\in G^{d+1}$, 
where $d=\dim G$. 
This fact ought to be well-known, but since we did not find a reference, 
we sketch a proof here (see also \cite[7.1]{cp}). Since $G$ is a connected Lie group, 
for any closed subgroup $K$ whose connected component is not normal in $G$, 
one has $\dim(K\cap gKg^{-1})<\dim K$ for a.e.\ $g\in G$. It follows that 
the connected component of $\bigcap_{i=1}^d g_iHg_i^{-1}$ is normal 
in $G$ almost surely (note that zero-dimensional subgroups are discrete and countable), 
and hence $\bigcap_{i=1}^{d+1} g_iHg_i^{-1}$ is normal in $G$ almost surely.
\end{rem}

\end{document}